\numberwithin{equation}{section}
\newtheorem{theorem}{Theorem}
\newtheorem{lemma}[theorem]{Lemma}
\newtheorem{corollary}[theorem]{Corollary}
\renewcommand{\leq}{\leqslant}
\renewcommand{\geq}{\geqslant}
\begin{document}

\title[On eventually greedy best underapproximations by Egyptian fractions]{On eventually greedy best underapproximations by Egyptian fractions}

\author[Vjekoslav Kova\v{c}]{Vjekoslav Kova\v{c}}

\address{Department of Mathematics, Faculty of Science, University of Zagreb, Bijeni\v{c}ka cesta 30, 10000 Zagreb, Croatia}

\email{vjekovac@math.hr}


\subjclass[2020]{
Primary
11D68; 
Secondary
11D75, 
11P99} 

\keywords{unit fraction, underapproximation, greedy algorithm, Lebesgue measure}

\begin{abstract}
Erd\H{o}s and Graham found it conceivable that the best $n$-term Egyptian underapproximation of almost every positive number for sufficiently large $n$ gets constructed in a greedy manner, i.e., from the best $(n-1)$-term Egyptian underapproximation. We show that the opposite is true: the set of real numbers with this property has Lebesgue measure zero.
\end{abstract}

\maketitle


\section{Introduction}
Ancient Egyptians preferred to write positive rational numbers as sums of distinct unit fractions, i.e., fractions with numerators $1$ and mutually different positive denominators. Modern-day interest in such representations might have been revived by Sylvester \cite{Syl80}, who also considered expansions of real numbers as infinite series of distinct unit fractions.
The ``Egyptian fractions'' quickly lead to numerous problems that are easy to formulate, but difficult to approach, so they have been one of the favourite occupations of the late Paul Erd\H{o}s \cite[\S 4]{EG80}.
Excellent survey papers on them were written by Graham \cite{Gra13} and by Bloom and Elsholtz \cite{BE22}. The topic is still very active and several breakthroughs have been obtained very recently \cite{LS24,Con24}.

For a positive integer $n$, we say that a rational number $q\in[0,\infty)$ is an \emph{$n$-term Egyptian underapproximation} of a real number $x\in(0,\infty)$ if
\begin{equation}\label{eq:under}
q = \sum_{k=1}^{n} \frac{1}{m_k} < x
\end{equation}
for some positive integers $m_1<m_2<\cdots<m_n$.
Conventions vary throughout the literature; for instance Nathanson \cite{Nat23} defines an Egyptian underapproximation to be a tuple of denominators $(m_1,m_2,\ldots,m_n)$, rather than a number $q$. We prefer the above definition, but we need to be aware that some rational numbers $q$ have many representations \eqref{eq:under} as sums of distinct unit fractions. It is also convenient to declare that $0$ is the unique $0$-term underapproximation of every number $x>0$.
It is easy to see that the largest $n$-term Egyptian underapproximation of any given $x\in(0,\infty)$ exists (see \cite[Theorem 3]{Nat23}); it will also be called \emph{the best $n$-term Egyptian underapproximation} of $x$. 

An effective way of approximating a number $x>0$ from below by sums of distinct unit fractions is to use a greedy algorithm.
We  define the \emph{greedy $n$-term Egyptian underapproximation} of $x>0$ as the number $\sum_{k=1}^{n} 1/m_k$, where $(m_k)_{k=1}^{\infty}$ is now the sequence of positive integers defined recursively as
\[ m_n := \Big\lfloor \Big(x - \sum_{k=1}^{n-1} \frac{1}{m_k}\Big)^{-1} \Big\rfloor + 1 \]
and the empty sum $\sum_{k=1}^{0}$ is interpreted as $0$.
It is easy to construct numbers $x$ for which these greedy underapproximations are not optimal; the fraction $x=11/24=0.45833\ldots$ being just one example. Namely, its greedy two-term Egyptian underapproximation is $1/3+1/9=0.44\ldots$, but its best two-term Egyptian underapproximation is larger, being $1/4+1/5=0.45$.
For this reason a very nice observation was that the best and the greedy $n$-term Egyptian underapproximations of the number $x=1$ coincide for every positive integer $n$; it was proved by Curtiss \cite{Cur22} and Takenouchi \cite{Tak21}. Several alternative proofs have appeared since; an elegant one has been given by Soundararajan \cite{Sou05}.
Erd\H{o}s \cite{Erd50} observed that this remains to hold whenever $x=1/b$ itself is a unit fraction.
Nathanson \cite[Theorem 5]{Nat23} showed that the same property is retained by rationals $x=a/b$ such that $a$ divides $b+1$, while Chu \cite[Theorem 1.12]{Chu23} extended it also to proper reduced fractions $x=a/b$ such that $b$ is odd and $l=2$ is the smallest positive integer such that $a$ divides $b+l$.

A weaker notion of ``eventually greedy'' underapproximations was suggested by Erd\H{o}s and Graham \cite[p.~31]{EG80}. 
We say that $x\in(0,\infty)$ has \emph{eventually greedy best Egyptian underapproximations} if there exist a strictly increasing sequence of positive integers $(m_k)_{k=1}^{\infty}$ and an integer $n_0\geq0$ such that for every $n\geq n_0$ the sum $\sum_{k=1}^{n} 1/m_k$ equals the best $n$-term Egyptian underapproximation of $x$.
In words, for sufficiently large $n$, an upgrade from the best $(n-1)$-term to the best $n$-term underapproximation of $x$ is performed by simply adding another, previously non-existing, unit fraction.
Erd\H{o}s and Graham, in their 1980 monograph \cite[p.~31]{EG80}, claimed that every positive rational has eventually greedy best Egyptian underapproximations, but gave no proof or a reference. Then they commented at the bottom of page 31:
\begin{quote}
\emph{It is not difficult to construct irrationals for which the result fails. Conceivably, however, it holds for almost all reals.}
\end{quote}
Whether or not almost every positive real number has eventually greedy best Egyptian underapproximations was formulated as Problem \#206 on Bloom's website \emph{Erd\H{o}s problems} \cite{EP}.
Moreover, Nathanson \cite[\S~Open problems, (4)]{Nat23} recently questioned the mere existence of such irrational numbers, since Erd\H{o}s and Graham provided no proof or a reference for that claim either.
The following theorem answers these questions.

\begin{theorem}\label{thm:main}
The set of positive real numbers with eventually greedy best Egyptian underapproximations has Lebesgue measure zero.
\end{theorem}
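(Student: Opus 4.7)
The plan is to show that the set $E$ of $x>0$ with eventually greedy best Egyptian underapproximations has measure zero via a countable decomposition. Writing
\[ E=\bigcup_{n_0\geq 0}\,\bigcup_{q_0\in Q_{n_0}} E_{n_0,q_0}, \]
where $Q_{n_0}$ denotes the countable set of $n_0$-term Egyptian rationals and
\[ E_{n_0,q_0}:=\{x>0 : B_{n_0}(x)=q_0 \text{ and } B_n(x)-B_{n-1}(x) \text{ is a unit fraction for every } n>n_0\} \]
(with $B_n(x)$ denoting the best $n$-term Egyptian underapproximation of $x$), countable subadditivity reduces the theorem to showing $|E_{n_0,q_0}|=0$ for each pair $(n_0,q_0)$.

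Fix such a pair. The set $\{x>0 : B_{n_0}(x)=q_0\}$ is an interval $(q_0,q_0^+]$ of some length $\sigma>0$, which I parameterize by $\alpha=x-q_0\in(0,\sigma]$. For $x\in E_{n_0,q_0}$ the ``added'' denominators $m_n$ (those with $B_n(x)=B_{n-1}(x)+1/m_n$) are uniquely forced by $\alpha$ through the greedy recursion $m_n=\lfloor 1/r_{n-1}\rfloor+1$, $r_n=r_{n-1}-1/m_n$, $r_{n_0}=\alpha$; the strict increase $m_n>m_{n-1}$ is automatic as soon as $r_{n-1}<1/m_{n-1}$, which happens for all large $n$ and can be absorbed into the choice of $n_0$. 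Hence $E_{n_0,q_0}$ corresponds bijectively to the set $A\subseteq(0,\sigma]$ of those $\alpha$ for which the greedy continuation of $q_0$ by the greedy expansion of $\alpha$ yields the best $n$-term underapproximation of $q_0+\alpha$ at every step $n>n_0$.

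The heart of the argument is then to show $|A|=0$. The natural strategy is to exhibit, for each $n>n_0$, an improvement event $F_n\subseteq(0,\sigma]$: the set of $\alpha$ for which some alternative $n$-term Egyptian representation strictly exceeds the greedy continuation $B_{n-1}(x)+1/m_n$. For $\alpha\in F_n$ greediness fails at step $n$, so $A\cap F_n=\varnothing$, and it suffices to show that a.e.\ $\alpha$ lies in $F_n$ for infinitely many $n$. A concrete source of improvements is to replace the last greedy denominator $1/m_{n-1}$ by a pair $1/a+1/b$, with $a<b$ both distinct from every previously used denominator, chosen so that $1/m_{n-1}+1/m_n<1/a+1/b<1/m_{n-1}+r_{n-1}$; this produces an $n$-term representation sitting strictly between the greedy continuation and $x$.

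The principal obstacle is this last step: producing a uniform positive lower bound on $|F_n|$, and then showing, by a Borel--Cantelli or quantitative ergodic argument along the orbit of the greedy dynamics map $T\colon r\mapsto r-1/(\lfloor 1/r\rfloor+1)$, that a.e.\ $\alpha$ falls in $F_n$ for infinitely many $n$. The underlying Diophantine question is how often the window of length $r_n$ near $1/m_{n-1}$ contains a sum $1/a+1/b$ with $a,b$ outside the current denominator set; controlling this reliably along the greedy orbit is the main technical burden I expect.
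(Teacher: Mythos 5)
Your setup is reasonable and the decomposition over base points $(n_0,q_0)$ is valid, but the argument stops exactly where the real difficulty begins: you state that the ``principal obstacle'' is showing $|A|=0$, and you propose a Borel--Cantelli or ergodic argument along the orbit of the greedy map $T$, but you do not carry it out. This is not a minor technicality. A uniform lower bound $|F_n|\geq c>0$ does not imply that almost every $\alpha$ belongs to some $F_n$; even $\sum_n |F_n|=\infty$ gives nothing without some independence or mixing between the events $F_n$, and no such structure is established. The greedy remainder map $T\colon r\mapsto r-1/(\lfloor 1/r\rfloor+1)$ is not an obviously ergodic or measure-preserving system, and the remainders $r_{n-1}$ along an orbit are far from equidistributed (they contract at a superexponential rate), so any ``quantitative ergodic argument'' would require substantial new input that your sketch does not provide. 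In short, you have identified the right improvement mechanism (better two-term underapproximations replacing the greedy ones) and correctly reduced the problem to controlling these improvement events, but the mechanism for upgrading a pointwise lower bound into a full-measure $\limsup$ statement is missing.

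The paper sidesteps this dependence issue entirely by not arguing along orbits. It observes that the map $x\mapsto B_t(x)$ partitions $(0,\infty)$ into countably many half-open intervals $\mathcal{I}_t$, that these partitions refine as $t$ grows, and that the set $X_{s,t}$ of points greedy from step $s$ to step $t$ is a union of intervals in $\mathcal{I}_t$. Within each such bounded interval $I=(q,r]$ the difference $x-q$ runs over a short interval, and whether $x$ stays greedy for two more steps is governed exactly by the two-term Egyptian underapproximation of $x-q$. Lemma~\ref{lm:main} then gives, interval by interval, a $1/1000$ fraction of failures, so $|X_{s,t+2}|\leq (1999/2000)|X_{s,t}|$ by a direct measure computation (Lemma~\ref{lm:inductive}). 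This geometric contraction holds \emph{conditionally on every interval} from the refinement, which is precisely what replaces independence in your sketch. Implementing a version of the paper's refinement-partition argument is the step you would need to add to make your proposal complete.
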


Thus, the question stemming from Erd\H{o}s and Graham's comment has a negative answer that is even at the other extreme of possibilities. Since the set of rational, and even algebraic, numbers is negligible in the measure-theoretic sense (being only countable), this also gives a positive, but non-constructive, answer to Nathanson's question. 

\begin{corollary}\label{cor:main}
There exists a transcendental real number that does not have eventually greedy best Egyptian underapproximations.
\end{corollary}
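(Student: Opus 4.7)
The plan is to deduce the corollary as an immediate measure-theoretic consequence of Theorem \ref{thm:main}, using nothing more than the countability of the algebraic numbers. Let $E\subseteq(0,\infty)$ denote the set of positive real numbers with eventually greedy best Egyptian underapproximations. By Theorem \ref{thm:main}, the set $E$ has Lebesgue measure zero.

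Next, let $A\subseteq\mathbb{R}$ be the set of real algebraic numbers. Since each algebraic real is a root of some nonzero polynomial with integer coefficients, and there are only countably many such polynomials and each has only finitely many real roots, the set $A$ is countable. In particular $A$ has Lebesgue measure zero.

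Therefore $E\cup A$ has Lebesgue measure zero, whereas $(0,\infty)$ has infinite Lebesgue measure. Hence $(0,\infty)\setminus(E\cup A)$ is nonempty — in fact it has full measure in $(0,\infty)$. Any real $x$ in this complement is simultaneously positive, transcendental, and outside $E$, which proves the corollary.

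There is no genuine obstacle here; all the substance has been absorbed into Theorem \ref{thm:main}. It is worth remarking only that the argument is purely non-constructive, as advertised in the paragraph preceding the statement: it gives no explicit transcendental witness but merely infers existence from a comparison of a null set with a set of full measure in $(0,\infty)$.
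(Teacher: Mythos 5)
Your argument is correct and coincides with the paper's own (brief) derivation: the paper likewise notes that the algebraic reals are countable, hence null, so Theorem \ref{thm:main} immediately yields a (positive, transcendental) real outside the null set of eventually-greedy numbers. Nothing further to add.
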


Non-constructive results are often obtained by the probabilistic method, which is how our approach could also be phrased. In particular, the proof below leaves no clue on how to find a particular transcendental number to which Corollary \ref{cor:main} applies.

\smallskip
A recursive construction will reduce Theorem \ref{thm:main} to the study of two-term underapproximations only.
We will need to find a positive share, independent of $i$ (such as $1\textup{\textperthousand}=1/1000$), of the numbers in the interval $(1/i,1/(i-1)]$ that have a more efficient two-term Egyptian underapproximation than the greedy one. Note that the greedy two-term underapproximation of any number from this interval is always of the form $1/i + 1/j$ for some $j\geq (i-1)i+1$.

\begin{lemma}\label{lm:main}
For every integer $i\geq1000$, at least $1\textup{\textperthousand}$ of the numbers in the interval\linebreak $(1/i,1/(i-1)]$ have non-greedy best two-term Egyptian underapproximations.
\end{lemma}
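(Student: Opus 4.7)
For $x \in (1/i, 1/(i-1)]$ write $\epsilon := x - 1/i \in (0, 1/(i(i-1))]$ and set $N := i(i+1)$, so the greedy two-term underapproximation of $x$ equals $1/i + 1/m$ where $m := \lfloor 1/\epsilon \rfloor + 1 \geq i(i-1)+1$. My plan is to exhibit a set of positive proportion within $(0, 1/(i(i-1))]$ on which some alternative of the form $1/(i+1) + 1/b$, for an integer $i + 1 < b < N$, strictly beats the greedy value.

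Direct manipulation shows that the two inequalities $1/(i+1) + 1/b > 1/i + 1/m$ and $1/(i+1) + 1/b < x$ are equivalent to $b$ being an integer in the open interval $\bigl(1/(1/N + \epsilon),\, u_m\bigr)$, with $u_m := mN/(m+N) = 1/(1/N + 1/m)$. The identity $u_m - u_{m-1} = N^2/((m+N)(m+N-1)) < 1$ tells us that the tiles $(u_{m-1}, u_m]$ partition $(0, N)$ into intervals each containing at most one integer; so to each integer $b < N$ one attaches the unique tile-index $m(b) := \lceil P(b) \rceil$ with $P(b) := bN/(N-b)$. A short computation shows that within the level $(1/m(b), 1/(m(b)-1)]$ of $\epsilon$, the alternative $1/(i+1) + 1/b$ beats the greedy precisely on the sub-interval $I_b := \bigl(1/P(b),\, 1/(m(b)-1)\bigr]$, of length $\lambda_b := \{P(b)\}/\bigl(\lfloor P(b) \rfloor\, P(b)\bigr)$ (interpreted as $0$ when $P(b) \in \mathbb{Z}$). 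These intervals lie in pairwise distinct levels, so they are disjoint; and the restriction $b > (i^2-1)/2$, which is equivalent to $P(b) > i(i-1)$, places every $I_b$ inside $(0, 1/(i(i-1))]$.

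The lemma therefore reduces to the bound
\[
S \;:=\; \sum_{(i^2-1)/2 \,<\, b \,<\, N} \frac{\{P(b)\}}{\lfloor P(b) \rfloor\, P(b)} \;\geq\; \frac{1}{1000\, i(i-1)}.
\]
Substituting $c := N - b$, each summand assumes the transparent shape $\{N^2/c\}/\bigl((\lfloor N^2/c \rfloor - N)(N^2/c - N)\bigr)$, and the naive heuristic in which $\{N^2/c\}$ has average value $1/2$ produces $S$ of order $1/N \approx 1/(i(i-1))$---three orders of magnitude larger than the target. The main obstacle will be converting this heuristic into an unconditional estimate. My approach is to restrict to a sub-range such as $c \in [N/4, N/2]$, where $\lambda_b \gtrsim \{P(b)\}/N^2$ and the per-step increments $P(b+1) - P(b) = N^2/(c(c-1))$ stay bounded (roughly between $4$ and $16$); within short windows of consecutive $b$'s, a three-distance or pigeonhole argument then forces a definite positive fraction of the values $\{P(b)\}$ to exceed $1/2$, and summing those contributions yields the required lower bound on $S$. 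The loose constant $1\textperthousand$ and the hypothesis $i \geq 1000$ comfortably absorb the accumulated losses in this crude argument.
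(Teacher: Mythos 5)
Your setup is correct and in fact more systematic than the paper's. You parametrize all two-term alternatives of the form $1/(i+1)+1/b$, compute exactly where each beats the greedy in terms of $P(b)=bN/(N-b)$, and derive the clean per-$b$ contribution $\lambda_b=\{P(b)\}/(\lfloor P(b)\rfloor P(b))$ together with the reduction to the sum $S$. The paper does the same thing in slightly disguised coordinates (its $x_k$ equals your $P(b)$ with $b=N/2+k$, and its interval \eqref{eq:twotermR} is your $I_b$), so the two proofs share the same strategy up to this point.

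The gap is in the last step, which you only sketch. You claim that because the increments $P(b+1)-P(b)=N^2/(c(c-1))$ ``stay bounded (roughly between $4$ and $16$)'' on the sub-range $c\in[N/4,N/2]$, ``a three-distance or pigeonhole argument then forces a definite positive fraction of the values $\{P(b)\}$ to exceed $1/2$.'' This is not true as stated. Boundedness of the increments is not enough: as $c$ sweeps $[N/4,N/2]$, the quantity $N^2/(c(c-1))$ passes near the integers $5,6,\ldots,15$, and near each such crossing the fractional part $\{P(b)\}$ changes by only a tiny amount per step, so it can linger near $0$ for a long run of consecutive $b$'s (roughly $\Theta(1/\delta)$ steps when the increment is $n+\delta$). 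Nothing in a three-distance or naive pigeonhole argument rules this out; you would first have to discard those bad windows, and a rough accounting shows they can occupy a constant fraction of your chosen range. What must be added — and this is precisely the crux of the paper's argument, not a detail you can absorb in the loose constant — is a restriction to a sub-range where the step size is quantitatively bounded away from every integer. The paper achieves this by taking $k=2l,2l+1$ with $l\in\mathcal{L}=[i(i+1)/100,3i(i+1)/200]\cap\mathbb{Z}$ (equivalently $c\approx(0.47\text{--}0.48)N$), where one checks $x_{2l+1}-x_{2l}\in[4+\tfrac13,4+\tfrac23]$, and then the pigeonhole is immediate: if both $\{x_{2l}\}$ and $\{x_{2l+1}\}$ were below $1/3$, the difference would be within $1/3$ of an integer, a contradiction. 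Once you add such a restriction, your argument closes in essentially the same way as the paper's; without it, the key lower bound on $S$ is unproved.
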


The numbers for which the greedy two-stage process is not optimal have been studied by Nathanson \cite{Nat23} and Chu \cite{Chu23}, but here we need to quantify the Lebesgue measure of the set that they make. Luckily, we do not need to be anyhow precise, as the result will ``bootstrap itself'' when we start considering underapproximations with more unit fractions.

\smallskip
Erd\H{o}s and Graham also wrote the following regarding the eventually greedy best Egyptian underapproximation property \cite[p.~31]{EG80}:
\begin{quote}
\emph{An attractive conjecture is that this also holds for any algebraic number as well.}
\end{quote}
Representations of algebraic numbers as greedy series of distinct unit fractions and their algorithmic aspects have been studied by Stratemeyer \cite{Str30} and Salzer \cite{Sal47}.
Finally, some $33$ years after \cite{EG80}, Graham \cite[295--296]{Gra13} implicitly retracted the claim that all rationals have eventually greedy Egyptian underaproximations. He first mollified it to:
\begin{quote}
\emph{Perhaps it is true that for any rational it does hold eventually.}
\end{quote}
and then stated it explicitly as yet another open question. This was also asked by Nathanson \cite[\S~Open problems, (4)]{Nat23}.
Here we do not make any progress towards these interesting open problems.

\smallskip
The following two sections are respectively dedicated to the proofs of Lemma \ref{lm:main} and Theorem \ref{thm:main}.
For real sequences $(x_i)$ and $(y_i)$ we write $x_i = \Theta(y_i)$ if both $|x_i|\leq C|y_i|$ and $|y_i|\leq C|x_i|$ hold for some unimportant constant $C\in(0,\infty)$ and every index $i$.
This notation will only be used in heuristic considerations, as otherwise we will prefer to be explicit.


\section{Proof of Lemma \ref{lm:main}}

Fix an integer $i\geq1000$. Consider numbers of the form
\begin{equation}\label{eq:twotermE}
\frac{1}{i+1} + \frac{1}{i(i+1)/2 + k}
\end{equation}
for integers $0\leq k\leq \lfloor i(i+1)/10\rfloor$.
If we define
\begin{equation}\label{eq:sequence}
x_k := \Big(\frac{1}{i+1} + \frac{1}{i(i+1)/2 + k} - \frac{1}{i}\Big)^{-1}
= i(i+1)\frac{i(i+1) + 2k}{i(i+1) - 2k},
\end{equation}
then the number \eqref{eq:twotermE} can be rewritten as
\[ \frac{1}{i} + \frac{1}{x_k} \]
and it falls into
\begin{equation}\label{eq:twotermI}
\Big(\frac{1}{i}+\frac{1}{j_k},\frac{1}{i}+\frac{1}{j_k-1}\Big], \quad j_k = \lfloor x_k\rfloor+1.
\end{equation}
Clearly,
\[ j_k > x_k \geq i(i+1) > (i-1)i+1, \] 
and from this we see that each interval \eqref{eq:twotermI} lies fully inside $(1/i,1/(i-1)]$.
We also have
\[ x_{k+1} - x_k = \frac{4i^2 (i+1)^2}{(i(i+1)-2k)(i(i+1)-2k-2)} > 1. \]
Thus, different values of $k$ lead to different integers $j_k$, so the obtained subintervals \eqref{eq:twotermI} are mutually different. 
The point \eqref{eq:twotermE} splits \eqref{eq:twotermI} into its left and right parts, the right one being
\begin{equation}\label{eq:twotermR}
\Big(\frac{1}{i} + \frac{1}{x_k}, \frac{1}{i}+\frac{1}{\lfloor x_k\rfloor}\Big].
\end{equation}
Every point from \eqref{eq:twotermR} has its best two-term Egyptian underapproximation at least equal to \eqref{eq:twotermE}, which is strictly larger than a greedy one, $1/i+1/j_k$.

Since there are $\Theta(i^2)$ considered intervals \eqref{eq:twotermI} and their lengths are easily seen to be $\Theta(j_k^{-2})=\Theta(x_k^{-2})=\Theta(i^{-4})$, the main idea is now to show that there are $\Theta(i^2)$ points \eqref{eq:twotermE} that fall reasonably far from the right endpoint of the corresponding interval \eqref{eq:twotermI}.
That way there will be $\Theta(i^2)$ intervals \eqref{eq:twotermR} with lenght $\Theta(i^{-4})$, so their total length will be $\Theta(i^{-2})$. This is a positive fraction of $1/((i-1)i)$, as desired. We remark that Chu \cite[\S3]{Chu23} has already used the idea of comparing the non-greedy two-term underapproximation $1/(i+1)+\cdots$ to the greedy one, namely $1/i+\cdots$, but gave no estimates on the length of the set for which the former underapproximation is superior.

The task is now reduced to finding $\Theta(i^2)$ terms of the above finite sequence $(x_k)$, such that $x_k$ is reasonably far from $\lfloor x_k\rfloor$.
In order to achieve this, one is tempted to study equidistribution properties of
\[ x_k = i(i+1) + 4k + \frac{8k^2}{i(i+1)-2k} \]
modulo $1$, but this leads to unnecessary complications, since $(x_k)$ is not quite a polynomial sequence; it only emulates the quadratic behaviour of $8k^2/(i(i+1))$ modulo $1$.
Luckily, the difference sequence $(x_{k+1}-x_{k})$ has a more stable dynamics and we will be content if, for many indices $k$, at least one of the terms $x_k$ and $x_{k+1}$ is far from $0$ modulo $1$. One might view at this trick as a cheap substitute for van der Corput's lemma.
More information about the relevant concepts from the theory of equidistribution can be found in \cite{KN74}, but we will not need them for the rest of the proof. These tools and the related Diophantine approximations \cite[Chapter 8]{Duj21} could become useful if one desires to obtain a quantitatively sharper version of Lemma \ref{lm:main}. 

Let us now make the arguments rigorous and finish the proof. 
Define 
\begin{equation}\label{eq:setL}
\mathcal{L} := \Big[\frac{i(i+1)}{100},\frac{3i(i+1)}{200}\Big] \cap \mathbb{Z}
\end{equation}
and observe that its cardinality is at least $i^2/200$.
For every $l\in\mathcal{L}$ the formula
\[ x_{2l+1} - x_{2l} = \frac{4i^2(i+1)^2}{(i(i+1)-4l)(i(i+1)-4l-2)} \]
guarantees
\begin{equation}\label{eq:difference}
4 + \frac{1}{3} \leq x_{2l+1} - x_{2l} \leq 4 + \frac{2}{3}.
\end{equation}
Consequently,
\[ x_{2l} - \lfloor x_{2l}\rfloor \geq \frac{1}{3} \quad\text{or}\quad x_{2l+1} - \lfloor x_{2l+1}\rfloor \geq \frac{1}{3}, \]
since otherwise $x_{2l+1} - x_{2l}$ would differ from an integer by less than $1/3$, which would contradict \eqref{eq:difference}.
Also note that, by the definitions \eqref{eq:sequence} and \eqref{eq:setL},
\[ x_{2l} < x_{2l+1} < \frac{6}{5}i^2. \]
In any case, there exist at least $i^2/200$ terms of the original sequence $(x_k)$ such that 
\[ x_k - \lfloor x_k\rfloor \geq 1/3 \quad\text{and}\quad x_k < \frac{6}{5}i^2. \]
For every such index $k$, the length of the interval \eqref{eq:twotermR} is
\[ \frac{1}{\lfloor x_k\rfloor} - \frac{1}{x_k} 
\geq \frac{x_k - \lfloor x_k\rfloor}{x_k^2} > \frac{25}{108i^4}, \]
and the total length of those intervals is greater than
\[ \frac{i^2}{200} \cdot \frac{25}{108i^4} > \frac{1}{1000(i-1)i}. \]
The last number is $1\text{\textperthousand}$ of the length of the considered interval $(1/i,1/(i-1)]$ and we are done.


\section{Proof of Theorem \ref{thm:main}}
Let
\[ H_n := \sum_{k=1}^{n} \frac{1}{k} \]
denote the $n$-th harmonic number.
We begin by listing some obvious properties of the Egyptian underapproximations.
\begin{enumerate}[label=(P\arabic*), ref=P\arabic*]
\item \label{it:P1} If $q\in\mathbb{Q}\cap[0,\infty)$ is the best $n$-term underapproximation of a number $x\in(0,\infty)$, then it is also the best $n$-term underapproximation of every real from the interval $(q,x]$. 
\item \label{it:P2} If two positive numbers have equal best $n$-term underapproximations, then they also have equal best $(n-1)$-term underapproximations. 
\end{enumerate}
For a positive integer $n$ define an equivalence relation $\sim_n$ on the set $(0,\infty)$ by proclaiming that $x\sim_n y$ if $x$ and $y$ have the same best $n$-term underapproximation. Then \eqref{it:P1} implies the following.
\begin{enumerate}[resume*]
\item \label{it:P3} Every equivalence class of $\sim_n$ is either an interval of the form $(q,r]$ for some rational numbers $q<r$, or the unbounded interval $(H_n,\infty)$. 
\end{enumerate}
Let $\mathcal{I}_n$ denote the collection of all intervals mentioned in \eqref{it:P3}. By definition they make a countable partition of $(0,\infty)$.
Next, \eqref{it:P2} implies the following further property.
\begin{enumerate}[resume*]
\item \label{it:P4} Partition $\mathcal{I}_{n}$ is a refinement of $\mathcal{I}_{n-1}$, i.e., every interval from $\mathcal{I}_{n-1}$ is a countable disjoint union of some intervals from $\mathcal{I}_{n}$.
\end{enumerate}
Finally, we will prove the following.
\begin{enumerate}[resume*]
\item \label{it:P5} Every bounded interval from $\mathcal{I}_{n}$ has length at most $1/(n(n+1))$.
\end{enumerate}
Namely, consider only numbers of the form
\begin{equation}\label{eq:regularnum}
\sum_{k=1}^{n} \frac{1}{m_k} 
= \sum_{k=1}^{l} \frac{1}{k} + \sum_{k=l+1}^{n} \frac{1}{m_k},
\end{equation}
where $m_1<m_2<\cdots<m_n$ are positive integers and there exists $0\leq l< n$ such that $m_k=k$ for $k=1,\ldots,l$ and $m_{k+1}\geq (m_k-1)m_k +1$ for $k=l+1,\ldots,n-1$.
By an easy induction on $n$ we see that each of the numbers \eqref{eq:regularnum}, other than $H_n$, is precisely $1/((m_k-1)m_k)$ apart from the numbers \eqref{eq:regularnum} to its right.
Since we have $m_n\geq n+1$ for every number \eqref{eq:regularnum} different from $H_n$, we conclude that the above numbers are $1/(n(n+1))$-dense in $(0,H_n]$.
No interval $I\in\mathcal{I}_n$ can have a number of the form \eqref{eq:regularnum} in its interior, so each bounded interval from $\mathcal{I}_n$ has length at most $1/(n(n+1))$.
This verifies \eqref{it:P5}.

\smallskip
For integers $0\leq s<t$, let $X_{s,t}$ denote the set of all numbers $x\in(0,H_s]$ for which there exist positive integers $m_1<m_2<\cdots<m_t$ such that $\sum_{k=1}^{n} 1/m_k$ is the best $n$-term Egyptian underapproximation of $x$ for $n=s,s+1,\ldots,t-1,t$.
Clearly, the set of all positive reals with eventually greedy best Egyptian underapproximations is contained in
\begin{equation}\label{eq:keyset}
\bigcup_{s=0}^{\infty} \underbrace{\bigcap_{t=s+1}^{\infty} X_{s,t}}_{\text{increases in }s}. 
\end{equation}
We will simply write $|A|$ for the Lebesgue measure of a measurable set $A\subseteq\mathbb{R}$.

\begin{lemma}\label{lm:inductive}
The estimate
\begin{equation}\label{eq:decayofY}
|X_{s,t+2}| \leq \frac{1999}{2000} |X_{s,t}|
\end{equation}
holds for all integers $100\leq s<t$.
\end{lemma}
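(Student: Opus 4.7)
My plan is as follows. By properties~\eqref{it:P2} and~\eqref{it:P3}, each $I\in\mathcal{I}_t$ either lies entirely in $X_{s,t}$ or is disjoint from it, so it suffices to prove $|I\cap X_{s,t+2}|\leq\tfrac{1999}{2000}|I|$ for each $I=(q,r]\subseteq X_{s,t}$. Inside such an $I$ the best underapproximations $q_s,\ldots,q_t$ are constant, so the denominators $m_n=1/(q_n-q_{n-1})$ for $n=s+1,\ldots,t$ in the canonical chain are uniquely determined. Because $m_{t-1}\geq t-1\geq 100\geq 3$, the only potential collision $m_t=m_{t-1}+1$ cannot occur: it would force $x-q_{t-1}$ to lie in $(1/(m_{t-1}+1),1/m_{t-1}]$, which is disjoint from the admissible range $(0,1/((m_{t-1}-1)m_{t-1})]$ coming from $\mathcal{I}_{t-1}$. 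Hence decrementing $m_t$ always yields a valid $t$-term sum, giving $|I|=1/((m_t-1)m_t)$ and $m_t\geq t+1\geq 102$.

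The key reduction is the following. For $x\in I$ set $y=x-q$; since $y\leq|I|<1/m_t$, any pair of unit fractions with sum below $y$ automatically has denominators strictly greater than $m_t$, so the distinctness constraint against $m_1,\ldots,m_t$ is vacuous. If $x\in X_{s,t+2}$, the witnessing denominators $m_{t+1}$ and $m_{t+2}$ are forced by $m_n=1/(q_n-q_{n-1})$ and therefore must coincide with the greedy two-term Egyptian underapproximation of $y$. Hence, whenever $y$ admits a two-term Egyptian sum $1/a+1/b$ strictly exceeding the greedy one, then $q+1/a+1/b<x$ is a $(t+2)$-term sum beating every greedy extension of $(m_1,\ldots,m_t)$, so the best $(t+2)$-term underapproximation of $x$ strictly exceeds that greedy extension and $x\notin X_{s,t+2}$.

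To conclude, I would tile $(0,|I|]=(0,1/((m_t-1)m_t)]$ by the subintervals $(1/i,1/(i-1)]$ for integers $i\geq(m_t-1)m_t+1\geq 101\cdot 102+1>1000$ and apply Lemma~\ref{lm:main} on each tile. The non-greedy witnesses it produces use denominators $i+1$ and $i(i+1)/2+k$, both strictly larger than $m_t$, so by the previous paragraph they certify $x\notin X_{s,t+2}$. Summing the per-mille estimates gives $|I\setminus X_{s,t+2}|\geq|I|/1000$, whence $|I\cap X_{s,t+2}|\leq\tfrac{999}{1000}|I|\leq\tfrac{1999}{2000}|I|$; summing over all $I\subseteq X_{s,t}$ then yields the lemma. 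The most delicate step is the middle paragraph: a single non-greedy two-term witness from Lemma~\ref{lm:main} suffices to rule out $X_{s,t+2}$-membership precisely because the extension chain $(m_n)_{n=s+1}^{t+2}$ is canonical, and everything else is a direct application of the tiling bound.
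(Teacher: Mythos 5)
Your argument follows the same broad template as the paper (decompose $X_{s,t}$ into intervals $I\in\mathcal{I}_t$, observe that $X_{s,t+2}$-membership forces the best two-term Egyptian underapproximation of $y=x-q$ to be greedy, and then invoke Lemma~\ref{lm:main} on the tiles $q+(1/i,1/(i-1)]$). However, there is a genuine gap: you assert the \emph{exact} identity $|I|=1/((m_t-1)m_t)$ and tile $(0,|I|]=(0,1/((m_t-1)m_t)]$ perfectly, but this identity is false in general. All one gets from the decrement $m_t\mapsto m_t-1$ is the \emph{upper} bound $|I|\leq 1/((m_t-1)m_t)$, because the right endpoint of $I$ is the \emph{smallest} $t$-term Egyptian sum exceeding $q_t$, and this need not be the decremented sum $q_{t-1}+1/(m_t-1)$: other $t$-term sums, built on completely different denominators, can land strictly between $q_t$ and $q_{t-1}+1/(m_t-1)$. (A two-term illustration: with canonical chain $(m_1,m_2)=(3,17)$, so $q_2=1/3+1/17=20/51$, the decremented sum is $1/3+1/16$, yet $1/4+1/7=11/28$ already lies strictly between them, so the interval in $\mathcal{I}_2$ has length $1/1428$, far smaller than $1/(16\cdot 17)=1/272$.) The same mechanism operates for every $t$, so the intervals in $\mathcal{X}_{s,t}$ generally have lengths that are \emph{not} of the form $1/((m_t-1)m_t)$.

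Once the identity fails, your tiling of $(0,|I|]$ by intervals $(1/i,1/(i-1)]$ for $i\geq(m_t-1)m_t+1$ no longer fits inside $(0,|I|]$, and the topmost piece of $I$ is left uncontrolled. This is exactly the edge effect that the paper handles by isolating the exceptional subinterval $I'=(q+1/i_0,r]$ and showing $|I'|<10^{-4}|I|$ via property~\eqref{it:P5}; that cheap bound is why the final constant degrades from $999/1000$ to $1999/2000$. Relatedly, your non-collision argument for $m_t=m_{t-1}+1$ implicitly assumes the same exact identity one level down ($|I_{t-1}|\leq 1/((m_{t-1}-1)m_{t-1})$), which would have to be established by induction starting from level $s$; as written, the claim is circular. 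Both issues are avoided in the paper, which never asserts anything stronger than $|I|\leq 1/(t(t+1))$. Your middle paragraph — that a single non-greedy two-term witness for $y$ rules out $x\in X_{s,t+2}$ because the chain is forced — is correct and matches the paper, but to make the reduction complete you must either prove $|I|\leq 1/((m_t-1)m_t)$ (not equality) and then budget for the short leftover piece at the top of $I$, or switch to the paper's exceptional-interval bookkeeping.
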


\begin{proof}
Each set $X_{s,t}$ is defined in terms of the best $n$-term underapproximations for $n\leq t$. 
(In particular, $(0,H_s]$ is the set of positive reals for which the best $s$-term underapproximation is less than $H_s$.)
By \eqref{it:P2} and \eqref{it:P4}, the set $X_{s,t}$ is a union of intervals from a subcollection of $\mathcal{I}_t$, which was introduced after property \eqref{it:P3}. 
We denote this subcollection by $\mathcal{X}_{s,t}$:
\begin{equation}\label{eq:bigunion}
X_{s,t} = \bigcup_{I\in\mathcal{X}_{s,t}} I.
\end{equation}
Since $X_{s,t}$ is bounded, each interval in $\mathcal{X}_{s,t}$ is bounded too.

Take an arbitrary non-empty interval $I\in\mathcal{X}_{s,t}$ and write it as $I=(q,r]$. 
Let $i_0$ be the smallest positive integer such that $1/i_0< r-q$. 
From \eqref{it:P5} and $t>100$ we know that $|I|<10^{-4}$, so
\[ i_0 > |I|^{-1} > 10^4. \]
The interval $I$ is a disjoint union of
\[ I ':= \Big(q+\frac{1}{i_0},r\Big] \]
and
\[ I_i := q + \Big(\frac{1}{i},\frac{1}{i-1}\Big] \quad\text{for $i=i_0+1,i_0+2,\ldots$}. \]
We treat $I'$ as an exceptional interval and we only need a rough bound on its relative length.
Namely, $q+1/(i_0-1)\geq r$ by the definition of $i_0$, and it means that
\begin{equation}\label{eq:exceptional}
|I'| \leq \frac{1}{i_0-1} - \frac{1}{i_0} = \frac{1}{(i_0-1)i_0} \leq \frac{10^{-4}}{i_0} < 10^{-4} |I|.
\end{equation}
Next, for an integer $i>i_0$ and every $x\in X_{s,t+2}\cap I_i$ we can say that:
\begin{itemize}
\item $q$ is the best $t$-term underapproximation of $x$;
\item $q+1/i$ is the best $(t+1)$-term underapproximation of $x$;
\item $q+1/i+1/j$ is the best $(t+2)$-term underapproximation of $x$ for some integer $j\geq (i-1)i+1$ depending on $x$.
\end{itemize}
From this, we see that the number $x-q\in (1/i,1/(i-1)]$ has greedy best two-term Egyptian underapproximation and it is $1/i+1/j$. 
(Namely, if $i'<j'$ were positive integers such that $1/i+1/j<1/i'+1/j'<x-q$, then we would have $i'>i$ and $q+1/i'+1/j'$ would be a better $(t+2)$-term underapproximation of $x$ than $q+1/i+1/j$.)
Lemma \ref{lm:main} implies that the set of such numbers has measure at most $999/1000$ of the length of $(1/i,1/(i-1)]$, so
\[ |X_{s,t+2}\cap I_i| \leq \frac{999}{1000} |I_i|. \]
Summing in $i=i_0+1,i_0+2,\ldots$ and using \eqref{eq:exceptional} we get
\[ |X_{s,t+2}\cap I| \leq \frac{1999}{2000} |I|. \]
Finally, summing in $I\in\mathcal{X}_{s,t}$, recalling \eqref{eq:bigunion}, and observing $X_{s,t+2}\subseteq X_{s,t}$, we obtain \eqref{eq:decayofY}.
\end{proof}

We can now finalize the proof of Theorem \ref{thm:main}.
Starting from $|X_{s,s+1}|\leq H_s$, $|X_{s,s+2}|\leq H_s$ and repeatedly applying Lemma \ref{lm:inductive}, we conclude
\[ |X_{s,t}| \leq \Big(\frac{1999}{2000}\Big)^{(t-s-2)/2} H_s \]
whenever $100\leq s<t$. Letting $t\to\infty$ we obtain
\[ \Big| \bigcap_{t=s+1}^{\infty} X_{s,t} \Big| = 0. \]
Taking the union over $s\geq100$ and using countable subadditivity of the Lebesgue measure, we conclude that the set \eqref{eq:keyset} also has measure zero.




\section*{Acknowledgments}
This work was supported in part by the Croatian Science Foundation under the project HRZZ-IP-2022-10-5116 (FANAP).
The author would like to thank Rudi Mrazovi\'{c} for a useful discussion and Thomas Bloom for founding the website \cite{EP}, where he discovered numerous interesting problems.
The author is also grateful to Richard Green for discussing the present paper and its background in the popular newsletter \emph{A Piece of the Pi: mathematics explained} \cite{RG2024}.


\bibliography{Egyptian_fractions}

\begin{thebibliography}{10}

\bibitem{EP}
Thomas~F. Bloom.
\newblock {Erd\H{o}s} problems.
\newblock \url{https://www.erdosproblems.com/}.
\newblock Accessed: June 7, 2024.

\bibitem{BE22}
Thomas~F. Bloom and Christian Elsholtz.
\newblock Egyptian fractions.
\newblock {\em Nieuw Arch. Wiskd. (5)}, 23(4):237--245, 2022.

\bibitem{Chu23}
H{\`u}ng~Viet Chu.
\newblock A threshold for the best two-term underapproximation by {E}gyptian fractions.
\newblock {\em Indag. Math. (N.S.)}, 35(2):350--375, 2024.
\newblock \href {https://doi.org/10.1016/j.indag.2024.01.006} {\path{doi:10.1016/j.indag.2024.01.006}}.

\bibitem{Con24}
David Conlon, Jacob Fox, Xiaoyu He, Dhruv Mubayi, Huy~Tuan Pham, Andrew Suk, and Jacques Verstra\"{e}te.
\newblock A question of {{E}rd\H{o}s} and {G}raham on {E}gyptian fractions.
\newblock Available at: https://arxiv.org/abs/2404.16016, 2024.

\bibitem{Cur22}
David~R. Curtiss.
\newblock On {K}ellogg's {D}iophantine {P}roblem.
\newblock {\em Amer. Math. Monthly}, 29(10):380--387, 1922.
\newblock \href {https://doi.org/10.2307/2299023} {\path{doi:10.2307/2299023}}.

\bibitem{Duj21}
Andrej Dujella.
\newblock {\em Number theory}.
\newblock \v{S}kolska Knjiga, Zagreb, 2021.

\bibitem{Erd50}
Paul Erd\H{o}s.
\newblock On a {D}iophantine equation.
\newblock {\em Mat. Lapok}, 1:192--210, 1950.

\bibitem{EG80}
Paul Erd\H{o}s and Ronald~L. Graham.
\newblock {\em Old and new problems and results in combinatorial number theory}, volume~28 of {\em Monographies de L'Enseignement Math\'{e}matique}.
\newblock Universit\'{e} de Gen\`eve, L'Enseignement Math\'{e}matique, Geneva, 1980.

\bibitem{Gra13}
Ronald~L. Graham.
\newblock Paul {Erd\H{o}s} and {E}gyptian fractions.
\newblock In {\em Erd\H{o}s centennial}, volume~25 of {\em Bolyai Soc. Math. Stud.}, pages 289--309. J\'{a}nos Bolyai Math. Soc., Budapest, 2013.
\newblock \href {https://doi.org/10.1007/978-3-642-39286-3\_9} {\path{doi:10.1007/978-3-642-39286-3\_9}}.

\bibitem{RG2024}
Richard Green.
\newblock A {P}iece of the {P}i: mathematics explained. {E}gyptian fractions.
\newblock \url{https://apieceofthepi.substack.com/p/egyptian-fractions}.
\newblock Accessed: September 24, 2024.

\bibitem{KN74}
Lauwerens Kuipers and Harald Niederreiter.
\newblock {\em Uniform distribution of sequences}.
\newblock Pure and Applied Mathematics. Wiley-Interscience [John Wiley \& Sons], New York-London-Sydney, 1974.

\bibitem{LS24}
Yang~P. Liu and Mehtaab Sawhney.
\newblock On further questions regarding unit fractions.
\newblock Available at: https://arxiv.org/abs/2404.07113, 2024.

\bibitem{Nat23}
Melvyn~B. Nathanson.
\newblock Underapproximation by {E}gyptian fractions.
\newblock {\em J. Number Theory}, 242:208--234, 2023.
\newblock \href {https://doi.org/10.1016/j.jnt.2022.07.005} {\path{doi:10.1016/j.jnt.2022.07.005}}.

\bibitem{Sal47}
Herbert~E. Salzer.
\newblock The approximation of numbers as sums of reciprocals.
\newblock {\em Amer. Math. Monthly}, 54:135--142, 1947.
\newblock \href {https://doi.org/10.2307/2305906} {\path{doi:10.2307/2305906}}.

\bibitem{Sou05}
Kannan Soundararajan.
\newblock Approximating $1$ from below using $n$ {E}gyptian fractions.
\newblock Available at: https://arxiv.org/abs/math/0502247, 2005.

\bibitem{Str30}
Gottfried Stratemeyer.
\newblock Stammbruchentwickelungen f\"{u}r die {Q}uadratwurzel aus einer rationalen {Z}ahl.
\newblock {\em Math. Z.}, 31(1):767--768, 1930.
\newblock \href {https://doi.org/10.1007/BF01246446} {\path{doi:10.1007/BF01246446}}.

\bibitem{Syl80}
James~J. Sylvester.
\newblock On a point in the theory of vulgar fractions.
\newblock {\em Amer. J. Math.}, 3(4):332--335, 1880.
\newblock \href {https://doi.org/10.2307/2369261} {\path{doi:10.2307/2369261}}.

\bibitem{Tak21}
Tanzo Takenouchi.
\newblock On an indeterminate equation.
\newblock {\em Proceedings of the Physico-Mathematical Society of Japan}, 3:78--92, 1921.

\end{thebibliography}
\bibliographystyle{plainurl}

\end{document}